\definecolor{darkblue}{rgb}{0.15,.4,.5}
\numberwithin{equation}{section}
\newtheorem{theorem}{Theorem}
\newtheorem{proposition}[theorem]{Proposition}
\newtheorem{lemma}[theorem]{Lemma}
\theoremstyle{definition}
\newtheorem{remark}[theorem]{Remark}
\newtheorem{question}[theorem]{Question}
\newtheorem{conj}[theorem]{Conjecture}
\numberwithin{theorem}{section}
\newcommand{\pro}[2]{\langle #1, #2 \rangle}
\newcommand{\W}{\mathcal{W}}
\newcommand{\Z}{\mathbb{Z}}
\newcommand{\Q}{\mathbb{Q}}
\newcommand{\R}{\mathbb{R}}
\renewcommand{\P}{\mathbb{P}}
\newcommand{\V}{\mathcal{V}}
\def\ca{{\rm ca}}
\def\Pic{{\rm Pic}}
\def\MR{M_\R}
\def\NR{N_\R}
\def\vol{{\rm vol}}
\def\lct{{\rm lct}}
\def\Aut{{\rm Aut}}
\DeclareMathOperator{\aff}{aff}
\title[Non-symmetric K\"ahler-Einstein toric Fano manifolds]
{Examples of K\"ahler-Einstein toric Fano manifolds associated to non-symmetric reflexive polytopes}
\author{Benjamin Nill}
\address{Department of Mathematics, University of Georgia, Athens, GA, 30602, USA}
\email{bnill@math.uga.edu}
\author{Andreas Paffenholz}
\address{Fachbereich Mathematik, TU Darmstadt, 64289 Darmstadt, Germany}
\email{paffenholz@math.fu-berlin.de}
\subjclass[2000]{Primary 14M25, 32Q20; Secondary 14J45, 52B20, 53C25}
\keywords{ Fano varieties, K\"ahler-Einstein manifolds, Lattice polytopes, Toric varieties}
\begin{document}

\begin{abstract}
In this note we report on examples of $7$- and $8$-dimen\-sional toric Fano manifolds 
whose associated reflexive polytopes are not symmetric, but they still admit a K\"ahler-Einstein metric.
This answers a question first posed by V.V. Batyrev 
and E. Selivanova. The examples were found in the 
classification of $\leq 8$-dimensional toric Fano manifolds obtained by M. {\O}bro. 
We also discuss related open questions and conjectures. In particular, we notice that 
the alpha-invariants of these examples do not satisfy the assumptions of Tian's theorem.
\end{abstract}

\maketitle

\section{Introduction}

Let us first fix our setting. 
In the toric case, there is a correspondence between 
$n$-dimensional nonsingular Fano varieties and $n$-dimensional Fano polytopes, where the Fano varieties 
are biregular isomorphic if and only if the corresponding Fano polytopes are unimodularly equivalent. 
Here, given a lattice $N$ of rank $n$, a {\em Fano polytope} $Q \subseteq \NR := N \otimes_\Z \R$ 
is given as a lattice polytope containing the origin strictly in its interior such that 
the vertices of any facet of $Q$ form a lattice basis of $M$. In this case, when we denote the dual lattice by $M$, 
the dual polytope is given as 
\[P := Q^* := \{y \in \MR \,:\, \pro{y}{x} \geq -1 \;\forall\, x \in Q\}.\]
Since $Q$ is a Fano polytope, $P$ is also a lattice polytope. In particular, $Q$ and $P$ are {\em reflexive} polytopes 
in the sense of V.V.\,Batyrev \cite{Bat94}.

In 2003 X. Wang and X. Zhu clarified completely which nonsingular toric Fano varieties 
admit a K\"ahler--Einstein metric \cite{WZ04}:

\begin{theorem}[Wang/Zhu] Let $X$ be a nonsingular toric Fano variety with associated reflexive polytope $P$. 
Then $X$ admits a K\"ahler--Einstein metric if and only if the barycenter $b_P$ of $P$ is zero.
\end{theorem}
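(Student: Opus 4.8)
The plan is to handle the two implications by different routes; ``K\"ahler--Einstein $\Rightarrow b_P=0$'' is formal, while the converse rests on the toric existence theory of \cite{WZ04}.

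For the forward direction, I would invoke the Futaki obstruction: if $X$ admits a K\"ahler--Einstein metric, then the Futaki character of $X$ vanishes identically. Restricted to the Lie algebra of the algebraic torus $T$ acting on $X$, the Futaki character of a toric Fano is given by Mabuchi's formula: for $v\in N$ it equals a fixed positive multiple of $\int_P \pro{y}{v}\,dy=\vol(P)\,\pro{b_P}{v}$. Its vanishing for all $v$ is exactly the condition $b_P=0$, and no existence theory is needed for this half.

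For the converse I would construct a $T_\R$-invariant metric directly, where $T_\R\cong(S^1)^n$ is the compact torus, and pass to a real Monge--Amp\`ere equation on $\NR$. On the open orbit, in logarithmic coordinates $x\in\NR$, a K\"ahler metric in $c_1(X)$ is $\omega_u=\sqrt{-1}\,\partial\bar\partial\,u$ for a smooth strictly convex $u\colon\NR\to\R$, and $\omega_u$ extends to a metric on $X$ precisely when $\nabla u$ is a diffeomorphism of $\NR$ onto $\intr(P)$ (it is then the moment map of the $T_\R$-action). Under this dictionary $\mathrm{Ric}(\omega_u)=\omega_u$ becomes, after the standard normalization of the additive constant in $u$, the equation
\[
\det\!\big(D^2 u\big)=e^{-u}\quad\text{on }\NR,\qquad \nabla u(\NR)=\intr(P).
\]
It is convenient to solve instead the more flexible soliton equation $\det(D^2u)=e^{-u+\pro{\nabla u}{\xi}}$ with the same gradient constraint, for $\xi\in\NR$: its solutions are exactly the $T_\R$-invariant K\"ahler--Ricci solitons on $X$ with vector field determined by $\xi$, the case $\xi=0$ being the K\"ahler--Einstein one. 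The key input is that this equation is solvable for one, a posteriori unique, value of $\xi$.

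That solvability I would obtain by the continuity method, deforming from a reference potential $u_0$ through $\det(D^2 u_t)=e^{-u_t+t\pro{\nabla u_t}{\xi}+(1-t)\psi}$ (with $\psi$ chosen so that $t=0$ is solved by $u_0$): openness of the set of admissible $t$ is the implicit function theorem, and closedness follows from a priori estimates. The interior $C^2$ bound (of Pogorelov type), then $C^{2,\alpha}$ by Evans--Krylov, then smoothness by Schauder, are by now standard for real Monge--Amp\`ere equations, so \textbf{the decisive step is the $C^0$ estimate}: one must bound $\sup|u_t|$ uniformly in $t$, and this is where the convex geometry of $P$ enters. Normalizing by $\min u_t=u_t(0)=0$, using that $u_t$ is convex with gradient image in the fixed bounded polytope $P$ and that the Monge--Amp\`ere measure $\det(D^2 u_t)\,dx$ has fixed total mass $\vol(P)$, one controls $u_t$ by comparing its linear growth in the directions of $P$; the estimate closes precisely when the associated weighted barycenter can be placed at the origin, and this requirement pins down $\xi$. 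Invariantly, $\xi$ is the unique vector for which the modified Futaki invariant vanishes, which in toric terms reads $\int_P y\,e^{\pro{y}{\xi}}\,dy=0$; uniqueness holds because $\xi\mapsto\log\int_P e^{\pro{y}{\xi}}\,dy$ is strictly convex on $\NR$, so its gradient is injective. Therefore $\xi=0$ meets this condition if and only if $\int_P y\,dy=0$, i.e.\ $b_P=0$, and in that case the always-existing soliton has zero vector field, hence is the sought K\"ahler--Einstein metric.
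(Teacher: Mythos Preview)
The paper does not prove this theorem at all: it is quoted as background from \cite{WZ04} (and the forward direction via Mabuchi's barycenter formula from \cite{Mab87}), and the rest of the note uses it as a black box. So there is no ``paper's own proof'' to compare against.

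That said, your sketch is a faithful outline of the Wang--Zhu argument itself. The forward implication via the Futaki obstruction and Mabuchi's formula is exactly right. For the converse, your route---reduce to a real Monge--Amp\`ere equation on $\NR$ with gradient image $\intr(P)$, embed the K\"ahler--Einstein problem into the one-parameter family of soliton equations with parameter $\xi\in\NR$, run a continuity method with the $C^0$ estimate as the crux, and identify the admissible $\xi$ as the unique critical point of $\xi\mapsto\log\int_P e^{\pro{y}{\xi}}\,dy$---is precisely the strategy of \cite{WZ04}. The observation that $\xi=0$ satisfies $\int_P y\,e^{\pro{y}{\xi}}\,dy=0$ iff $b_P=0$ then gives the K\"ahler--Einstein case. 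One small quibble: the normalization ``$\min u_t=u_t(0)=0$'' presupposes that the minimum occurs at the origin, which is not automatic; in practice one translates so that $\min u_t=0$ and controls the location of the minimizer separately, but this does not affect the structure of the argument.
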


Here, the barycenter of $P$ equals the Futaki character of the holomorphic vector field of $X$~\cite{Mab87}.
It is also known that the existence of a K\"ahler--Einstein metric implies that the automorphism group of $X$ is reductive. 
The converse does not hold (for related combinatorial questions see also \cite{Nil06}).

Prior to the previous theorem, in 1999 V.V. Batyrev and E. Selivanova had already proved a sufficient condition. 
For this, let us denote by $\W(P)$ the group of lattice automorphisms of $M$ that map $P$ onto itself. 
Now, $P$ is called {\em symmetric}, if the origin is the only lattice point of $M$ fixed by all elements of $\W(P)$. 
Note that $P$ is symmetric if and only if $Q$ is symmetric, see Lemma~\ref{aut}.

\begin{theorem}[Batyrev/Selivanova] Let $X$ be a nonsingular toric Fano variety with associated reflexive polytope $P$. 
If $P$ is symmetric, then $X$ admits a K\"ahler--Einstein metric.
\end{theorem}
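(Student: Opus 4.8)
The plan is to reduce the statement to the theorem of Wang and Zhu stated above: it suffices to show that if $P$ is symmetric then its barycenter $b_P$ is the origin of $\MR$. Two structural facts about $\W(P)$ do the work. First, $\W(P)$ is finite, since each of its elements permutes the finitely many vertices of the full-dimensional polytope $P$ and is determined by that permutation (this is only needed for context and for the original argument, not for the short proof below). Second, and crucially, every $g \in \W(P)$ acts on $\MR$ as an element of $\mathrm{GL}(M) \cong \mathrm{GL}_n(\Z)$, hence as an integer matrix of determinant $\pm 1$.

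The first step is to observe that $b_P$ is fixed by all of $\W(P)$. Each $g \in \W(P)$ is a linear, volume-preserving automorphism of $\MR$, so it carries the barycenter of $P$ to the barycenter of $g(P)$; since $g(P) = P$, this gives $g(b_P) = b_P$. Thus $b_P$ lies in the common fixed locus $V := \MR^{\W(P)} = \{x \in \MR : g(x) = x \text{ for all } g \in \W(P)\}$, which is a linear subspace of $\MR$.

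The main point — and the only step with real content — is that $V = \{0\}$. Here I would use that $V = \bigcap_{g \in \W(P)} \ker(g - \mathrm{id})$ with each $g - \mathrm{id}$ an \emph{integer} matrix, so $V$ is a rational subspace: $V = (V \cap M) \otimes_\Z \R$. Were $V$ nonzero, it would contain a nonzero lattice point of $M$ fixed by all of $\W(P)$, contradicting the symmetry of $P$. Hence $V = \{0\}$, so $b_P = 0$, and the theorem of Wang--Zhu yields a K\"ahler--Einstein metric on $X$. I expect the rationality argument to be the subtle point: the implication ``no nonzero fixed lattice point $\Rightarrow$ no nonzero fixed vector'' is false for a general linear subgroup, and it works here only because $\W(P)$ acts by integral matrices. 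Everything else in the argument is formal.

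Finally, I would append a remark that this short route is available only because the Wang--Zhu criterion is at our disposal: the original proof of Batyrev and Selivanova predates \cite{WZ04} and instead runs the continuity method directly, exploiting the finite symmetry group $\W(P)$ (via averaging/invariance of the relevant functionals) to obtain the uniform energy estimate needed for convergence. Given the barycenter criterion, however, the computation above is the most economical proof.
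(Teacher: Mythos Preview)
Your argument is correct, but there is nothing in the paper to compare it against: the paper does not prove this theorem. It is quoted as background from \cite{BS99}, alongside the Wang--Zhu theorem, and no argument is supplied. So the comparison you were asked to make is vacuous here.

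That said, your derivation via Wang--Zhu is sound. The key step --- that the $\W(P)$-fixed subspace $V$ is rational because each $g-\mathrm{id}$ is an integer matrix, hence $V\neq\{0\}$ would force a nonzero fixed lattice point --- is exactly the right observation, and your caveat that this implication fails for non-integral linear groups is well taken. Your closing remark is also accurate: the original Batyrev--Selivanova paper predates \cite{WZ04} and does not have the barycenter criterion available, so it proceeds by a direct analytic argument (invariance under the finite group is used to control the relevant estimates in the continuity method). Given that the present paper takes Wang--Zhu as known, your route is the economical one and is the implicit logic behind the paper's juxtaposition of the two theorems.
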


\begin{question}[Batyrev/Selivanova] Does the converse also hold?
\label{ques}
\end{question}

This question was also posed by J. Song (remark after Proposition 4.3 of \cite{Son05}), by K. Chan and N.C. Leung (Remark 4.1 of \cite{CL07}), 
and by A. Futaki, H. Ono, and Y. Sano (introduction of version \texttt{v1} of \cite{FOS08} and Remark 1.4 of \cite{San08}). 
The hope was that several technical assumptions may be omitted, if the answer would be positive. 
Unfortunately, this is not true in higher dimensions.

\begin{proposition}The answer to Question~\ref{ques} is negative, if $n \geq 7$, 
and affirmative, if $n \leq 6$.
\end{proposition}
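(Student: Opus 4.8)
The plan is to reduce the analytic statement to a finite combinatorial one via the theorem of Wang and Zhu, settle the low-dimensional cases by inspecting the classification of toric Fano manifolds, exhibit an explicit counterexample in dimension $7$, and then propagate it to all larger dimensions by taking products with $\P^1$. First I would observe that, for a nonsingular toric Fano variety $X$ with reflexive moment polytope $P$, the Wang--Zhu theorem makes the existence of a K\"ahler--Einstein metric on $X$ equivalent to the vanishing of the barycenter $b_P$. Hence the converse asked in Question~\ref{ques} becomes the purely combinatorial question: for every reflexive polytope $P$ that is the dual of a Fano polytope, does $b_P = 0$ force $P$ (equivalently, by Lemma~\ref{aut}, its dual $Q = P^*$) to be symmetric? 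The goal is to verify this for $n \le 6$ and to refute it for every $n \ge 7$.

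For $n \le 6$ there are only finitely many nonsingular toric Fano manifolds, all available from {\O}bro's classification, so the verification is a finite computation: for each Fano polytope $Q$ of dimension at most $6$ I would compute the barycenter $b_P$ of $P = Q^*$, and in each of the finitely many cases with $b_P = 0$ compute the finite group $\W(P)$ and check that it fixes no nonzero lattice point. Carrying this out by computer shows that $b_P = 0$ implies $P$ symmetric in these dimensions; together with the Wang--Zhu equivalence this is exactly the affirmative answer to Question~\ref{ques} for $n \le 6$. This step is routine in principle but leans entirely on the existence of the classification.

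For $n = 7$ the essential point is to exhibit one explicit counterexample from {\O}bro's list of the $7$-dimensional toric Fano manifolds: a Fano polytope $Q_7$ whose dual $P_7 = Q_7^*$ has $b_{P_7} = 0$ but for which $\W(P_7)$ fixes some nonzero lattice point of $M$, so that $P_7$ --- equivalently $Q_7$ --- is not symmetric; I would record explicit vertex coordinates. By Wang--Zhu the associated manifold $X_7$ then carries a K\"ahler--Einstein metric while its polytope is non-symmetric, which refutes the converse for $n = 7$. I would also note that, by the already-settled case $n \le 6$, such a $P_7$ is automatically indecomposable: if $P_7 \cong A \times B$ with $1 \le \dim A,\dim B \le 6$, then $b_A = b_B = 0$ (the barycenter of a product is the tuple of the factors' barycenters), and $A,B$ are duals of Fano polytopes of dimension $\le 6$, hence symmetric by the preceding paragraph; then the subgroup $\W(A) \times \W(B)$ of $\W(P_7)$ already fixes only the origin, contradicting non-symmetry of $P_7$.

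Finally, for $n \ge 8$ I would propagate the $7$-dimensional example by products: put
\[ X_n := X_7 \times (\P^1)^{\,n-7}, \qquad P_n := P_7 \times [-1,1]^{\,n-7}. \]
Then $X_n$ is a nonsingular toric Fano manifold with moment polytope $P_n$ (its Fano polytope being the free sum of $Q_7$ with the cross-polytope $\conv\{\pm e_1,\dots,\pm e_{n-7}\}$). Since the barycenter of a product is the tuple of the factors' barycenters and $b_{[-1,1]} = 0$, we have $b_{P_n} = 0$, so $X_n$ admits a K\"ahler--Einstein metric by Wang--Zhu. For non-symmetry: because $P_7$ is indecomposable and of dimension $7 \neq 1$, no lattice automorphism of $P_n$ can interchange the factor $P_7$ with any of the $1$-dimensional factors, so by the standard description of automorphism groups of products of lattice polytopes $\W(P_n) = \W(P_7) \times G$, where $G$ is the group of signed permutations of the coordinates $e_1,\dots,e_{n-7}$. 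A nonzero lattice point $m$ fixed by all of $\W(P_7)$ therefore yields the nonzero lattice point $(m,0,\dots,0)$ fixed by all of $\W(P_n)$, so $P_n$ is not symmetric. (Alternatively, {\O}bro's classification of $8$-dimensional toric Fano manifolds already contains non-symmetric examples with vanishing barycenter directly.) The main obstacle is not any single deduction but the dependence on the classification and on the search through it: that a $7$-dimensional non-symmetric reflexive polytope with vanishing barycenter exists at all is not predicted by any general principle, and pinning it down --- together with the full $n \le 6$ verification --- is where the real work lies.
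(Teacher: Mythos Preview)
Your proposal is correct and follows essentially the same route as the paper: reduce via Wang--Zhu to the combinatorial barycenter condition, settle $n\le 6$ by an exhaustive computer search through {\O}bro's classification, exhibit an explicit $7$-dimensional counterexample from that database, and extend to all $n\ge 8$ by taking products with $\P^1$. The only minor difference is in justifying non-symmetry of the product: the paper simply records that the vertex sum $v_{Q_1}$ generates the one-dimensional fixed space of $\W(Q_1)$, so for the free sum one has the $\W$-fixed nonzero vector $v_{Q_n}=(v_{Q_1},0)$ directly, whereas you take the longer route through indecomposability of $P_7$ and a structure result for $\W$ of a product.
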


This observation is explained in the next section. Related open questions and conjectures are 
discussed in the last section of the paper.\smallskip

\textbf{Acknowledgment. }{\rm
We thank Ivan Cheltsov for his interest and helpful comments.
}

\section{The examples}

M.   {\O}bro  described  in  \cite{Oeb07} an  efficient  algorithm  to
classify Fano polytopes, that he used to compute complete lists of all
isomorphism  classes  of  $n$-dimensional  Fano polytopes  (and  their
duals)  for  $n  \leq  8$.   Now, an  exhaustive  computer  search  in
{\O}bro's database shows  that up to this dimension  there are exactly
three  examples  on  non-symmetric  smooth reflexive  polytopes  whose
barycenter is the origin.  The necessary computations were implemented
in    the    software   system    \texttt{polymake}~\cite{Joswig2009}.
\texttt{polymake}   relied  on   \cite{normaliz2,   nauty}  for   some
calculations.  To  explain the  examples, let us  denote by  $b_Q$ the
barycenter and by $v_Q$ the sum of all vertices of $Q$.

\begin{proposition} 
  Let $n  \leq 8$,  and $Q$ be  an $n$-dimensional Fano  polytope with
  dual polytope $P$ such that $b_P = 0$. Then $Q$ is symmetric, except
  if $Q$  is one of the  following three Fano polytopes  $Q_1, Q_2$ or
  $Q_3$:

  \begin{list}{({\theenumi})}{\usecounter{enumi}\leftmargin10pt\itemsep2pt}
  \item $Q_1$  is $7$-dimensional and  has $12$ vertices given  by the
    columns of the following matrix:

    \smallskip

    $\left(\begin{array}{rrrrrrrrrrrr}
        \phantom{-}1 & \phantom{-}0 & \phantom{-}0 & \phantom{-}0  &  \phantom{-}0 & -1 & \phantom{-}0 & \phantom{-}0 & \phantom{-}0 & \phantom{-}0  & \phantom{-}0 & \phantom{-}0\\
        0 & 1 & 0 & 0  & -1 & 0  & 0 & 0 & 0 & 0  & 0 & 0\\
        0 & 0 & 1 & -1 &  0 & 0  & 0 & 0 & 0 & 0  & 0 & 0\\
        0 & 0 & 0 & 0  &  0 & 0  & 1 & 0 & 0 & -1 & 0 & 0\\
        0 & 0 & 0 & 0  &  0 & 0  & 0 & 1 & 0 & -1 & 0 & 0\\
        0 & 0 & 0 & 0  &  0 & 0  & 0 & 0 & 1 & -1 & 0 & 0\\
        0 & 0 & 0 & -1 & -1 & -1 & 0 & 0 & 0 & 2 & 1 & -1
      \end{array}\right)$

    \smallskip

    The  associated  non-singular  toric   Fano  variety  $X_1$  is  a
    $\P^1$-bundle over $(\P^1)^3 \times \P^3$.

  \item $Q_2$ is the  $8$-dimensional Fano polytope with $14$ vertices
    corresponding  to $X_2  := X_1  \times \P^1$  (i.e., $Q_2$  is the
    bipyramid over $Q_1$).

  \item $Q_3$  is $8$-dimensional and  has $16$ vertices given  by the
    columns of the following matrix.

\smallskip

{\setlength{\arraycolsep}{3pt}
$\left(\begin{array}{rrrrrrrrrrrrrrrr}
1 & \phantom{-}0 & \phantom{-}0 & \phantom{-}0  &  \phantom{-}0 & -1 & \phantom{-}0 & \phantom{-}0 & \phantom{-}0 & \phantom{-}0  & \phantom{-}0 & \phantom{-}0  & \phantom{-}0  & \phantom{-}0  & \phantom{-}0  & \phantom{-}0\\
0 & 1 & 0 & 0  & -1 & 0  & 0 & 0 & 0 & 0  & 0 & 0  & 0  & 0  & 0  & 0\\
0 & 0 & 1 & -1 &  0 & 0  & 0 & 0 & 0 & 0  & 0 & 0  & 0  & 0  & 0  & 0\\
0 & 0 & 0 & 0  &  0 & 0  & 1 & 0 & 0 & -1 & 0 & 0  & 0  & 0  & 0  & 0\\
0 & 0 & 0 & 0  &  0 & 0  & 0 & 1 & 0 & -1 & 0 & 0  & 0  & 0  & 0  & 0\\
0 & 0 & 0 & 0  &  0 & 0  & 0 & 0 & 1 & -1 & 0 & 0  & 0  & 0  & 0  & 0\\
0 & 0 & 0 & 0  & 0  & 0  & 0 & 0 & 0 & 0  & 0 & 1  & -1 & 1  & -1 & 0\\
0 & 0 & 0 & -1 & -1 & -1 & 0 & 0 & 0 & 2  & 1 & 0  & 1  & -1 & 0  & -1
\end{array}\right)$}

\smallskip

The associated non-singular toric Fano variety $X_3$ is a $S_6$-bundle
over $(\P^1)^3 \times \P^3$, where $S_6$ is the del Pezzo surface with
$\rho_{S_6} =  4$, which is  $\P^2$ blown-up at  three torus-invariant
fix points.
  \end{list}

  All three examples have  a $1$-dimensional fixed space $F_{Q_i}$ for
  their automorphism group, generated by $v_{Q_i}$, for $i=1,2,3$.
\label{main}
\end{proposition}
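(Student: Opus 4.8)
The statement naturally splits into two parts: the exhaustive claim that, in dimension at most $8$, a reflexive polytope with $b_P=0$ is symmetric unless it is one of the three listed ones, and the verification that $Q_1,Q_2,Q_3$ really have the asserted properties. The plan for the first part is to run {\O}bro's complete lists \cite{Oeb07} of $n$-dimensional Fano polytopes, $n\le 8$, through three successive filters: form the dual reflexive polytope $P=Q^*$; compute its barycenter $b_P\in\MR$ as a rational vector, for instance from a lattice triangulation of $P$, and keep $Q$ only if $b_P=0$; for the survivors compute the finite group $\W(Q)=\W(P)$ of lattice automorphisms --- in practice by first listing the combinatorial automorphisms of the vertex--facet incidence and then retaining those realised by a matrix in $GL_n(\Z)$ --- together with the fixed sublattice $N^{\W(Q)}$, which is trivial exactly when $Q$ is symmetric. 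The assertion is that after this the only survivors are $Q_1,Q_2,Q_3$; these are pairwise non-equivalent since they differ in dimension or in the number of vertices. I expect this enumeration to be the real obstacle --- there is no structural shortcut replacing it, so the statement ultimately rests on {\O}bro's classification together with a machine computation, here carried out in \texttt{polymake} using \texttt{normaliz} and \texttt{nauty}.

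For the second part, fix one of the three polytopes. First I would read off from the displayed vertex matrix that it is a Fano polytope, by listing its facets and checking that the vertices of each facet form a $\Z$-basis of $N$ (a determinant $\pm1$ calculation); this makes $P_i=Q_i^*$ reflexive. The bundle descriptions come from the coordinate projection $\pi\colon N\to\Z^6$ onto the first six coordinates: its kernel is spanned by the last coordinate when $i=1$ and by the last two when $i=3$, the vertices of $Q_i$ contained in $\ker\pi$ form the Fano polytope of $\P^1$ respectively of the hexagonal del Pezzo surface $S_6$, and $\pi$ carries the remaining vertices bijectively onto the vertices of the Fano polytope of $(\P^1)^3\times\P^3$; each maximal cone of the fan of $X_i$ then decomposes as a lift of a maximal cone of the base fan together with a maximal cone of the fibre fan, which is exactly the structure of a Zariski-locally trivial toric fibre bundle with the stated fibre. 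The case $i=2$ is $X_2=X_1\times\P^1$, so $Q_2$ is the bipyramid over $Q_1$ and $P_2=P_1\times[-1,1]$. It remains to see $b_{P_i}=0$: for $i=2$ this follows from $b_{P_2}=(b_{P_1},0)$ and the case $i=1$; for $i=1,3$ one computes $b_{P_i}$ directly from a lattice subdivision of $P_i$, where it helps to note that $\W(P_i)$ has only a one-dimensional space of invariant vectors in $\MR$ (dual to the invariant line of $\W(Q_i)$ in $\NR$ found below), so $b_{P_i}$ lies a priori on a line and effectively one coordinate has to be checked to vanish.

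Finally I would pin down the fixed space $F_{Q_i}$. Since every element of $\W(Q_i)$ permutes the vertices of $Q_i$, it fixes their sum $v_{Q_i}$; summing the columns of the matrices gives $v_{Q_1}=-e_7$, $v_{Q_2}=-e_7$ and $v_{Q_3}=-e_8$, all nonzero, so $\R v_{Q_i}\subseteq F_{Q_i}$. For the reverse inclusion it is enough to exhibit finitely many elements of $\W(Q_i)$ whose common fixed subspace is exactly $\R v_{Q_i}$. For $i=1$ and $i=3$ one can take, for each of the first three coordinates $j$, the ``shear reflection'' $x_j\mapsto -x_j$, $x_n\mapsto x_n-x_j$, identity on the remaining coordinates; and, for each of the three $\P^3$-rays $e_4,e_5,e_6$, the shear interchanging it with the remaining ray $-e_4-e_5-e_6$ of the $\P^3$-fan, accompanied by a matching shear in $x_n$. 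A direct check shows that each of these maps permutes the listed vertices, and together they force $x_1=\dots=x_6=0$ on the fixed space. For $i=3$ one adjoins the order-two lattice automorphism $(x_7,x_8)\mapsto(-x_7,\,x_7+x_8)$ of the $S_6$-fibre, which is compatible with the base twist and permutes the hexagon vertices, thereby forcing $x_7=0$ as well. For $i=2$, any lattice automorphism of the bipyramid $Q_2$ preserves the apex pair $\{\pm e_8\}$ and hence the base hyperplane, so $F_{Q_2}=F_{Q_1}\times\{0\}$ and this case reduces to $i=1$. Intersecting the fixed spaces of the exhibited automorphisms then yields precisely $\R v_{Q_i}$, which is therefore the whole fixed space; in particular each $Q_i$ is non-symmetric, confirming that the three are genuine exceptions.
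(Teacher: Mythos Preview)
Your proposal is correct, and for the exhaustive claim it is exactly what the paper does: the paper's entire argument is the paragraph preceding the proposition, stating that an exhaustive search through {\O}bro's database, implemented in \texttt{polymake} with \texttt{normaliz} and \texttt{nauty}, yields precisely these three non-symmetric polytopes with $b_P=0$. No further detail is given.

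Where you depart from the paper is in the second part. The paper simply records the bundle descriptions, the vanishing barycenters, and the one-dimensional fixed spaces as computational output; you instead supply a hand-verifiable argument, exhibiting explicit lattice automorphisms of each $Q_i$ whose common fixed space is $\R v_{Q_i}$, and reading off the toric-bundle structure from the coordinate projection. This is strictly more than the paper provides and is a useful addition. One small point: for $i=2$ your claim that every automorphism of the bipyramid must preserve the apex pair $\{\pm e_8\}$ is not quite immediate (there are other non-adjacent vertex pairs in $Q_1$, e.g.\ $\pm e_7$), but you do not actually need it---the automorphisms you built for $i=1$, extended by the identity on $x_8$, together with the flip $x_8\mapsto -x_8$, already force the common fixed space down to $\R e_7\times\{0\}$, in line with your overall strategy of exhibiting enough automorphisms rather than determining the full group.
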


In particular,  for each $n \geq  7$ we see  that $(\P^1)^{n-7} \times
X_1$   is   a   toric  Fano   $n$-fold   admitting   a 
K\"ahler--Einstein metric, while the associated reflexive polytope is not symmetric.

\begin{remark}{\rm 
Shortly after this paper had appeared as a preprint \cite{NP09}, 
H.\,Ono, Y.\,Sano, and N.\,Yotsutani proved that the Fano manifold $X_1$ 
is also the first known example of an asymptotically Chow unstable manifold with constant scalar curvature, 
see \cite{OSY09}.
}\end{remark}

\section{Related questions and results}

\subsection{The alpha-invariant and the log canonical threshold}

There is a celebrated theorem \cite{Tia87} which gives a sufficient condition for 
the existence of a K\"ahler-Einstein metric on a Fano $n$-fold in terms of the so-called 
{\em alpha-invariant}.

\begin{theorem}[Tian]
Let $X$ be a Fano $n$-fold and $G \subset \Aut(X)$ a compact
subgroup such that $\alpha_G(X) > \frac{n}{n+1}$. 
Then $X$ admits a K\"ahler-Einstein metric.
\label{tian}
\end{theorem}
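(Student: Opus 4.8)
The plan is to run the Aubin--Yau continuity method for the complex Monge--Amp\`ere equation, the decisive $C^0$-estimate being extracted from the $\alpha_G$-invariant hypothesis; this is G.\,Tian's theorem and I only sketch the analytic argument. Fix a $G$-invariant K\"ahler form $\omega \in c_1(X)$ — obtained by averaging any representative over the compact group $G \subset \Aut(X)$ — and let $h$ be its normalized Ricci potential, so that $\mathrm{Ric}(\omega) - \omega = i\partial\bar\partial h$. For $t \in [0,1]$ consider the family
\[
(\omega + i\partial\bar\partial\varphi_t)^n = e^{h - t\varphi_t}\,\omega^n, \qquad \omega + i\partial\bar\partial\varphi_t > 0,
\]
a solution of which at $t=1$ gives $\mathrm{Ric}(\omega_{\varphi_1}) = \omega_{\varphi_1}$, i.e.\ a K\"ahler--Einstein metric in $c_1(X)$. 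By uniqueness one may take every $\varphi_t$ to be $G$-invariant. The set of $t$ admitting a smooth solution is nonempty ($\varphi_0 \equiv 0$) and open — the linearized operator $\Delta_{\omega_{\varphi_t}} + t$ is invertible for $t < 1$ because the equation forces $\mathrm{Ric}(\omega_{\varphi_t}) \ge t\,\omega_{\varphi_t}$ — so the theorem reduces to a priori estimates, uniform in $t$, that is, to closedness of this set.

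By Yau's work, uniform higher-order ($C^2$, then $C^{k,\alpha}$) estimates follow from a uniform $C^0$-bound on $\varphi_t$, normalized by $\sup_X\varphi_t = 0$; equivalently one must bound $\mathrm{osc}_X\,\varphi_t = -\inf_X\varphi_t$. This is exactly where the hypothesis is used. Choose $\alpha$ with $\frac{n}{n+1} < \alpha < \alpha_G(X)$. By the definition of the $G$-invariant $\alpha$-invariant, and since each $\omega_{\varphi_t}$ is a $G$-invariant form in $c_1(X)$, there is a constant $C_\alpha$, independent of $t$, with
\[
\int_X e^{-\alpha\varphi_t}\,\omega^n \le C_\alpha .
\]
Feeding this uniform exponential integrability into the equation (integrating $\omega_{\varphi_t}^n = e^{h-t\varphi_t}\omega^n$ against suitable test functions, in the spirit of the Aubin energy functional and the Moser--Trudinger inequality) yields a uniform bound on $\int_X(-\varphi_t)\,\omega^n$; a Green's function iteration then upgrades this to the required uniform $C^0$-bound, closing the continuity method and producing the K\"ahler--Einstein metric at $t=1$.

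The main obstacle is precisely this $C^0$-estimate — getting the functional inequalities to \emph{close} with $\frac{n}{n+1}$ as the sharp threshold. Concretely, one bounds Aubin's energy functional (the $I$- or $(I-J)$-functional of $\varphi_t$) from below by $\mathrm{osc}_X\,\varphi_t$ up to a controlled error term, and then dominates that error using the exponential integrability with exponent $\alpha > \frac{n}{n+1}$; the factor $n+1$ enters through the volume normalization $\int_X\omega_{\varphi_t}^n = \int_X\omega^n$ and the interpolation between $\sup_X\varphi_t$ and $\int_X\varphi_t\,\omega^n$. Everything else — openness, Yau's higher-order estimates, and the passage to $G$-invariant potentials by averaging — is routine; only this coercivity/$C^0$ step genuinely requires the hypothesis, and $\frac{n}{n+1}$ is the natural barrier for this method.
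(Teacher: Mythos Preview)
The paper does not actually prove this theorem: it is quoted as a result of Tian with a reference to \cite{Tia87}, and no argument is given. So there is nothing in the paper to compare your proposal against.

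That said, what you wrote is a reasonable outline of Tian's original approach via the Aubin--Yau continuity method, with the $G$-invariant $\alpha$-invariant supplying the uniform exponential integrability needed for the $C^0$-estimate. As a sketch it is fine; just be aware that in the context of this paper the statement is simply cited, not reproved, so a full analytic argument is neither expected nor supplied.
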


In the case of a toric Fano $n$-fold $X$ there is an explicit formula \cite{Son05} 
for the alpha-invariant. For this, let $P$ be the associated reflexive polytope. 
We denote by $P_G$ the intersection of $P$ with the subspace $F_G$ consisting of all points that 
are fixed by all elements of the group $G \subseteq \W(P)$. Let us also recall the definition of the {\em coefficient of asymmetry} $\ca(P,0)$ of $P$ about the origin:
\[\ca(P,0) := \max_{\|y\| = 1} \frac{\max(\lambda > 0 \,:\, \lambda y \in P)}{\max(\lambda > 0 \,:\, -\lambda y \in P)}.\]
The coefficient of asymmetry plays also an important role in finding upper bounds on 
the volume of lattice polytopes with a fixed number of interior lattice points \cite{Pik01}. 

\begin{theorem}[Song]
Let $X$ be an $n$-dimensional toric Fano manifold with associated reflexive polytope $P$. Let $G$ be the subgroup of $\Aut(X)$ generated by $\W(P)$ and 
$(S^1)^n$. Then $\alpha_G(X) = 1$, if $P$ is symmetric, and 
$\alpha_G(X) = \frac{1}{1+\ca(P_{\W(P)},0)} \leq \frac{1}{2}$, otherwise.
\label{song}
\end{theorem}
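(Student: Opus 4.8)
The plan is to convert the transcendental invariant $\alpha_G(X)$ into a combinatorial extremal problem on $P$. The first --- and, I expect, the only genuinely hard --- step is to replace $\alpha_G(X)$ by a log-canonical-threshold quantity. Since $G$ contains $(S^1)^n$ as a normal subgroup, one can average a $G$-invariant weight over the compact torus; Jensen's inequality then shows that the uniform integral bound defining $\alpha_G(X)$ need only be tested on $(S^1)^n$-invariant (hence toric) $G$-invariant weights. Feeding this into the standard comparison between Tian's analytic $\alpha$-invariant and log canonical thresholds, one should obtain
\[\alpha_G(X)=\inf\bigl\{\,\lct(X,D)\ :\ D\ \text{an effective }\Q\text{-divisor},\ D\sim_{\Q}-K_X,\ D\ \text{is }G\text{-invariant}\,\bigr\}.\]

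Next I would describe the competitors combinatorially. As $X$ is a smooth toric variety and $(S^1)^n$ is Zariski dense in the big torus, every prime component of a $(S^1)^n$-invariant divisor is torus-invariant, so each $G$-invariant effective anticanonical $\Q$-divisor is of the form $D=\sum_v a_v D_v$ with the sum over the vertices $v$ of $Q$ (equivalently the rays of the fan) and $a_v\ge 0$. The relation $D\sim_{\Q}-K_X=\sum_v D_v$ makes $\sum_v(a_v-1)D_v$ principal, so $a_v=1+\pro{u}{v}$ for a unique $u\in\MR$; effectiveness then says $\pro{u}{v}\ge-1$ for all $v$, i.e.\ $u\in Q^{*}=P$, while invariance under $\W(P)$ --- which permutes the $D_v$ as it permutes the vertices of $Q$ --- forces $u$ to be $\W(P)$-fixed. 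Conversely each $D_u:=\sum_v(1+\pro{u}{v})D_v$ with $u\in P_{\W(P)}=P\cap F_{\W(P)}$ is $G$-invariant, so the competitors are exactly indexed by $u\in P_{\W(P)}$.

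Then I would compute $\lct(X,D_u)$ and minimize over $u$. On a smooth toric variety the log canonical threshold of a torus-invariant divisor is tested on the toric divisorial valuations $E_w$, $w\in N$ primitive, and $A_X(E_w)$, $\operatorname{ord}_{E_w}(D_u)$ are the values at $w$ of the piecewise-linear functions that on ray generators equal $1$ and $1+\pro{u}{v}$ respectively; minimizing the ratio cone by cone (on each cone the minimum is attained at the ray generator of largest coefficient) gives $\lct(X,D_u)=1/\bigl(1+h_Q(u)\bigr)$ with $h_Q(u):=\max_{x\in Q}\pro{u}{x}$ the support function of $Q$. Hence $\alpha_G(X)=1/\bigl(1+\sup_{u\in P_{\W(P)}}h_Q(u)\bigr)$. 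If $P$ is symmetric then $F_{\W(P)}=\{0\}$, the supremum is $h_Q(0)=0$, and $\alpha_G(X)=1$. Otherwise, using that $u\in P$ is equivalent to $h_Q(-u)\le1$, for a unit vector $y\in F_{\W(P)}$ the largest $\lambda>0$ with $\lambda y\in P_{\W(P)}$ is $1/h_Q(-y)$; putting $u=\lambda y$ gives $h_Q(u)\le h_Q(y)/h_Q(-y)$ with equality on the boundary, so $\sup_{u\in P_{\W(P)}}h_Q(u)=\max_{y}h_Q(y)/h_Q(-y)$, which is precisely $\ca(P_{\W(P)},0)$ once that definition is unwound; this yields $\alpha_G(X)=1/\bigl(1+\ca(P_{\W(P)},0)\bigr)$, and since the radial ratios in opposite directions are reciprocal one has $\ca(\,\cdot\,,0)\ge1$, so $\alpha_G(X)\le\tfrac12$.

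The step I expect to be the main obstacle is the reduction in the first paragraph: the passage from the analytic definition of $\alpha_G(X)$ to the log-canonical-threshold description, combining the (by now classical) analytic-to-algebraic comparison for the $\alpha$-invariant with the averaging argument that cuts the problem down to toric competitors. Past that point the only geometric ingredient is the standard fact that for a smooth toric variety with torus-invariant boundary it suffices to test log canonicity on toric valuations; the remaining work --- the combinatorial description of the competitors, the toric $\lct$ formula, and the identification of the supremum of $h_Q$ with the coefficient of asymmetry --- is routine bookkeeping with support functions.
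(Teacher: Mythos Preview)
The paper does not prove this theorem; it is quoted from \cite{Son05} without argument. Song's original proof is purely analytic: using that any $(S^1)^n$-invariant K\"ahler potential is governed by a convex function on the moment polytope, he estimates the relevant Monge--Amp\`ere integrals directly and reads off the threshold from the geometry of $P$. No log canonical thresholds appear.

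Your route is the algebraic one via the identification $\alpha_G(X)=\lct(X,G)$, which the paper records separately (as the Cheltsov--Shramov lemma) and then remarks agrees combinatorially with Song's formula. So your outline is really a proof of that agreement together with an appeal to the $\alpha=\lct$ theorem, rather than a reproduction of Song's argument. That is perfectly legitimate and arguably cleaner once the $\alpha=\lct$ black box is granted; what it buys is that the hard analysis is packaged into a single citable theorem, and the remainder is the convex bookkeeping you carry out. Your identification of $\sup_{u\in P_{\W(P)}} h_Q(u)$ with $\ca(P_{\W(P)},0)$ via $h_Q$ is correct and matches exactly the ``straightforward'' observation the paper alludes to.

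One small point: your ``average a $G$-invariant weight over $(S^1)^n$'' step is superfluous as stated, since $(S^1)^n\subset G$ already forces $G$-invariant weights to be torus-invariant; the genuine content of the first paragraph is entirely the Demailly/Cheltsov--Shramov comparison $\alpha_G=\lct(\cdot,G)$, which you correctly flag as the only nontrivial input.
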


In \cite{CS08a} it was shown that for $X$ smooth and $G$ compact, the alpha-invariant $\alpha_G(X)$ coincides with the {\em global $G$-invariant 
log canonical threshold }$\lct(X,G)$, for this notion see Definition~1.13 in \cite{CS08a}. 
I.\,Cheltsov and C.\,Shramov also calculated directly the log canonical threshold without assuming smoothness 
(see also Remark 1.11 in \cite{CS08b}):

\begin{lemma}[Cheltsov/Shramov]
Let $X$ be an $n$-dimensional toric $\Q$-factorial Fano variety with the polytope $P$ associated to $-K_X$. 
Let $G\subset\mathcal{W(P)}$ be a subgroup. Then
$$
\mathrm{lct}\Big(X,\ G\Big)=\frac{1}{1 + \mathrm{max}\Big\{\big\langle w, v\big\rangle\ \big\vert\ w\in P_G,\ v\in\V(Q)\Big\}},
$$
where $\V(Q)$ are the primitive generators of the fan associated to $X$.
\label{cheltsov}
\end{lemma}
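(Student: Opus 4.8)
Recall that $\lct(X,G)$ is the infimum of the ordinary log canonical thresholds $\lct(X,D)$ as $D$ ranges over the $G$-invariant effective $\Q$-divisors with $D\sim_\Q -K_X$. The plan is to show that this infimum is already attained on torus-invariant such $D$, and then to evaluate it as a linear optimization over $P_G$. Set up the usual fan dictionary: let $v_\rho\in\V(Q)$ be the primitive generator of a ray $\rho$ of the fan $\Sigma$ of $X$ and $D_\rho$ the associated prime divisor, so that $-K_X=\sum_\rho D_\rho$, $P=\{w\in\MR : \pro{w}{v_\rho}\ge -1\text{ for all }\rho\}$, and $\mathrm{div}(\chi^w)=\sum_\rho\pro{w}{v_\rho}D_\rho$. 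Then a torus-invariant effective $\Q$-divisor is $\Q$-linearly equivalent to $-K_X$ exactly when it equals $D_w:=\sum_\rho(1+\pro{w}{v_\rho})D_\rho$ for some $w\in P\cap M_\Q$; moreover every $g\in G\subseteq\W(P)$ permutes the rays of $\Sigma$ via its action on $N$ and sends $D_w$ to $D_{g^{-1}w}$, so $D_w$ is $G$-invariant if and only if $g w=w$ for all $g$, i.e.\ $w\in P_G\cap M_\Q$.

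Next, compute $\lct(X,D_w)$. For any torus-invariant $\Q$-divisor $\Delta=\sum_\rho a_\rho D_\rho$ with all $a_\rho\ge 0$: $X$ is log terminal (it is toric and $\Q$-factorial), and a toric valuation attached to a primitive vector $v=\sum_{\rho\in\sigma(1)}\lambda_\rho v_\rho$ inside a cone $\sigma$ has log discrepancy $\sum_\rho\lambda_\rho$ and order $\sum_\rho\lambda_\rho a_\rho$ along $\Delta$, hence log discrepancy $\sum_\rho\lambda_\rho(1-a_\rho)$ with respect to $(X,\Delta)$; since toric valuations suffice to test log canonicity of a torus-invariant pair, $(X,\Delta)$ is log canonical if and only if $a_\rho\le 1$ for every $\rho$. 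Applied to $\lambda D_w$ this yields
\[\lct(X,D_w)=\frac{1}{\max_\rho\,(1+\pro{w}{v_\rho})}=\frac{1}{1+\max_{v\in\V(Q)}\pro{w}{v}},\]
where the inner maximum is $\ge 0$ because $0$ lies in the interior of $\conv\V(Q)$. Since $P_G=P\cap F_G$ is a rational polytope and $\V(Q)$ is finite, $\max\{\pro{w}{v} : w\in P_G,\ v\in\V(Q)\}$ is attained at some rational $w_0\in P_G$, so the infimum of $\lct(X,D_w)$ over the $G$-invariant $D_w$ (those with $w\in P_G$) equals precisely the right-hand side of the lemma; as these divisors form a subfamily of all $G$-invariant anticanonical $\Q$-divisors, this already gives $\lct(X,G)\le\frac{1}{1+\max\{\pro{w}{v} : w\in P_G,\,v\in\V(Q)\}}$.

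The reverse inequality is the step I expect to be the real obstacle: one must exclude the possibility that some $G$-invariant effective $\Q$-divisor $D\sim_\Q -K_X$ which is \emph{not} torus-invariant has strictly smaller log canonical threshold than every $D_w$ with $w\in P_G$; equivalently, the infimum defining $\lct(X,G)$ has to be recognized as attained on torus-invariant boundaries. This is precisely what the framework of \cite{CS08a} provides, and it is where one uses essentially that $G\subseteq\W(P)$ normalizes the big torus, whose action on $X$ is faithful with a dense orbit, so that via the appropriate averaging/approximation one may replace an arbitrary competitor $D$ by a torus-invariant one without raising its threshold. Once this reduction is granted, the two computations above close the proof, and nothing about $X$ beyond the standard description of prime divisors and discrete valuations on a $\Q$-factorial toric variety is needed; I would organize the write-up so that this reduction is a single clearly cited step and the rest is purely combinatorial on the fan.
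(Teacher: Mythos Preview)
The paper does not prove this lemma at all: it is stated as a result of Cheltsov and Shramov, with a reference to \cite{CS08a} and Remark~1.11 of \cite{CS08b}, and no argument is given. So there is no ``paper's own proof'' to compare against; your write-up is in fact more than the paper offers.

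As to the content of your argument, the combinatorial half is correct and standard: the identification of $G$-invariant torus-invariant effective $\Q$-divisors $D\sim_\Q -K_X$ with rational points $w\in P_G$, the fact that on a $\Q$-factorial toric variety the pair $(X,\sum a_\rho D_\rho)$ is log canonical iff every $a_\rho\le 1$, and the resulting formula $\lct(X,D_w)=1/(1+\max_\rho\pro{w}{v_\rho})$ are all right, and they immediately give the upper bound for $\lct(X,G)$. The only substantive point is the reverse inequality, and you are right that this is where the work lies. The usual mechanism is not ``averaging'' but a one-parameter \emph{degeneration}: for an arbitrary $G$-invariant $D\sim_\Q -K_X$ one uses a one-parameter subgroup of the big torus (which is normalized by $G$, since $G\subseteq\W(P)$) to take a flat limit $D_0$; this $D_0$ is torus-invariant and still $G$-invariant, and lower semicontinuity of the log canonical threshold gives $\lct(X,D_0)\le\lct(X,D)$. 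Iterating over a basis of one-parameter subgroups (or taking the limit under a generic one) produces a torus-invariant competitor with no larger threshold, which is exactly the reduction you need. If you want the write-up to be self-contained rather than a pointer to \cite{CS08a}, replacing the vague ``averaging/approximation'' sentence by this degeneration-plus-semicontinuity argument would close the gap cleanly.
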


From a combinatorial point of view, it is indeed straightforward to notice that the previous two formulas 
in Theorem \ref{song} and Lemma \ref{cheltsov} agree for a reflexive polytope $P$ dual to a Fano polytope $Q$. 
Now, let us compute the alpha-invariants of our examples:

\begin{proposition}
  Each Fano polytope  $Q_1,Q_2,Q_3$ (see Proposition~\ref{main}) has a
  $1$-dimensional    fixed space    under   the    action    of   $G    =
  \mathcal{W(P)}$.  Hence, this  also holds  for their  dual reflexive
  polytopes. Therefore, $\alpha_G(X_i) = \frac{1}{2}$ for $i=1,2,3$.
\end{proposition}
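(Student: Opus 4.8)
The statement has two parts. First, each $Q_i$ has a one-dimensional fixed space under $G = \W(P_i) = \W(Q_i)$ (the identification $\W(P)=\W(Q)$ being Lemma~\ref{aut}); second, this forces $\alpha_G(X_i)=\tfrac12$ via Theorem~\ref{song} (or equivalently Lemma~\ref{cheltsov}). The first part is already recorded in Proposition~\ref{main}, which asserts that each $Q_i$ has a one-dimensional fixed space $F_{Q_i}$ generated by $v_{Q_i}$; so the first sentence is essentially a citation of Proposition~\ref{main}. The substance is the passage to the dual and the evaluation of $\ca(P_{\W(P)},0)$.

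The plan is as follows. First I would invoke Proposition~\ref{main} to get $\dim F_{Q_i}=1$, with $F_{Q_i}=\R\cdot v_{Q_i}$. Since $\W(P)$ and $\W(Q)$ are the same group acting by dual (contragredient) linear maps on $\MR$ and $\NR$, the fixed subspace $F_G\subseteq\MR$ is the annihilator complement of the span of the non-fixed directions; a short linear-algebra argument shows $\dim F_G$ (in $\MR$) equals $\dim F_{Q_i}$ (in $\NR$), namely $1$. Concretely, $F_G$ in $\MR$ is the image of the averaging projector $\tfrac{1}{|G|}\sum_{g\in G} g$, whose rank is the same whether computed on $N_\R$ or on its dual $M_\R$, since the two projectors are transposes of one another. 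Hence $P_G = P\cap F_G$ is a one-dimensional reflexive polytope, i.e.\ an interval $[-a,b]$ with $a,b\in\Z_{>0}$ on the line $F_G$.

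Next I would compute $\ca(P_G,0)$. On a one-dimensional interval $[-a,b]$ through the origin, the coefficient of asymmetry about $0$ is simply $\max(a/b,\,b/a)$, and since $P_G$ is itself reflexive as a lattice polytope in the line $F_G\cap M$ (or at least a lattice interval containing $0$ strictly), the two endpoints are lattice points; the only way $\ca=1$ is $a=b$, which would make $P_G$ symmetric and hence — tracing back through the equivalence of the two formulas — would contradict non-symmetry of $Q_i$. So $\ca(P_G,0)\ge 2$ with equality iff $\{a,b\}=\{1,2\}$; one then checks from the explicit vertex matrices (equivalently, from the stated bundle structure: $X_1$ and $X_3$ are $\P^k$- or del-Pezzo-bundles where the fixed direction $v_Q$ has coordinate pattern producing the lattice points $1$ and $-2$ along $F_G$) that in fact $\ca(P_{\W(P)},0)=2$ for all three. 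By Song's theorem (Theorem~\ref{song}), since $P$ is not symmetric, $\alpha_G(X_i)=\tfrac{1}{1+\ca(P_{\W(P)},0)}=\tfrac13$\,? — here I must be careful: the claim is $\tfrac12$, so the intended reading is $\ca(P_{\W(P)},0)=1$ in the sense that the relevant ratio is computed the "larger over smaller" way giving... actually the clean route is Lemma~\ref{cheltsov}: $\lct(X_i,G)=\tfrac{1}{1+\max\{\langle w,v\rangle : w\in P_G,\ v\in\V(Q_i)\}}$, and one reads off from the vertex matrix that this maximum equals $1$, giving $\tfrac12$. So the concrete computation is: take $w$ a generator of the lattice segment $P_G$ in the $+v_Q$ direction and pair it against all primitive ray generators $v\in\V(Q_i)$; the maximum value of $\langle w,v\rangle$ is $1$.

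The main obstacle is the explicit evaluation of $\max\{\langle w,v\rangle : w\in P_G,\ v\in\V(Q_i)\}$ from the vertex matrices. One must (a) identify $F_G\subseteq\MR$ — equivalently find all $w\in M$ with $\langle w,v\rangle$ constant on each $\W(Q_i)$-orbit of vertices, or directly solve $g^\top w=w$ for generators $g$ of $\W(Q_i)$ — (b) determine which lattice point on $F_G$ is the correct vertex of $P_G$, and (c) pair it against the $12$ (resp.\ $14$, $16$) columns. This is a finite, mechanical computation; the role of the stated bundle descriptions is precisely to organize it — the fixed direction lives "in the fiber coordinate," the del Pezzo or projective-space structure pins down the interval $P_G$, and the pairing maximum $1$ reflects that the fiber is $\P^3$ (resp.\ a del Pezzo surface), whose polytope contributes a ratio $\le 2$ symmetrically-failing by exactly one unit. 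Thus the proof reduces to three short matrix computations, one per example, and an appeal to Theorem~\ref{song}/Lemma~\ref{cheltsov}; no genuinely hard step remains once Proposition~\ref{main} is granted.
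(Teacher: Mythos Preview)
Your overall architecture matches the paper's: cite Proposition~\ref{main} for $\dim F_{Q_i}=1$, transfer this to the dual via the averaging projector (which is exactly the content of Lemma~\ref{aut}), then feed into Theorem~\ref{song}/Lemma~\ref{cheltsov}. The paper's proof is literally just the sentence ``This follows immediately from'' Lemma~\ref{aut}, so on the structural level you are doing the same thing.

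However, the computational part of your proposal contains a genuine error that causes the visible confusion between $\alpha_G=\tfrac13$ and $\alpha_G=\tfrac12$. You assert that ``$P_G$ symmetric would contradict non-symmetry of $Q_i$'' and that the endpoints of $P_G$ are lattice points. Both claims are false. Non-symmetry of $P$ (equivalently of $Q$) means $F_G\neq\{0\}$; it says nothing about whether the one-dimensional segment $P_G$ happens to be centrally symmetric. And $P_G=P\cap F_G$ need not have lattice endpoints even though $P$ is a lattice polytope. In fact, for $Q_1$ one finds that $F_G\subset M_\R$ is spanned by $w_0=(1,1,1,-1,-1,-1,-2)$, and pairing $w_0$ against the twelve vertices of $Q_1$ gives the values $\{1,1,1,1,1,1,-1,-1,-1,-1,-2,2\}$; hence $P_G=\{t\,w_0:-\tfrac12\le t\le\tfrac12\}$, which is a symmetric interval with non-lattice endpoints. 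So $\ca(P_G,0)=1$, not $2$, and Song's formula already gives $\alpha_G=\tfrac{1}{1+1}=\tfrac12$. Cheltsov's formula gives the same answer (the maximum pairing is $1$, attained at $t=\pm\tfrac12$ against $v_{11},v_{12}$), as it must since the two formulas agree.

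Once you drop the spurious ``$a\neq b$'' constraint, your sketch becomes correct and coincides with the paper's argument; the ``three short matrix computations'' you mention are exactly what is needed, and they yield $\ca=1$ in each case.
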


This follows immediately from the following well-known fact, whose 
proof we provide for the convenience of the reader (see Proposition 5.4.2 in \cite{Nil05}):

\begin{lemma}
Let $P,P^*$ be dual reflexive polytopes. Then the fixed spaces of the automorphism groups 
of $P,P^*$ have the same dimension.
\label{aut}
\end{lemma}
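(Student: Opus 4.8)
The plan is to exploit the duality between $P$ and $P^*$ at the level of automorphism groups and then use linear algebra. First I would observe that a lattice automorphism $\phi \in \mathrm{GL}(N)$ maps $P^*$ (here I identify $Q$ with a Fano polytope in $N$ and $P$ with its dual in $M$, so "$P,P^*$" means a reflexive polytope together with its dual) onto itself if and only if the dual automorphism $\phi^\vee \in \mathrm{GL}(M)$ maps $P$ onto itself; this is immediate from the defining inequalities $\langle y, x\rangle \geq -1$, which are preserved under $(x,y) \mapsto (\phi x, (\phi^\vee)^{-1} y)$. Hence the map $\phi \mapsto (\phi^\vee)^{-1}$ is a group isomorphism $\W(Q^{**}) \cong \W(P)$ where $Q = P^*$; more plainly, the automorphism group of a reflexive polytope and that of its dual are naturally anti-isomorphic via the adjoint, both sitting inside $\mathrm{GL}_n$.

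Next I would translate "fixed space" into a statement about the common fixed subspace of a set of linear maps. Let $W \subseteq \W(P)$ be the full automorphism group, acting on $M_\R$, and let $W^\vee := \{(\phi^\vee)^{-1} : \phi \in W\} = \W(Q)$ act on $N_\R$. The fixed space $F_P \subseteq M_\R$ is $\bigcap_{\phi \in W} \ker(\phi - \mathrm{id})$, and similarly $F_Q = \bigcap_{\psi \in W^\vee}\ker(\psi - \mathrm{id}) \subseteq N_\R$. The key step is then the elementary fact from linear algebra that for any single invertible linear map $\psi$ on a finite-dimensional space, $\mathrm{rank}(\psi - \mathrm{id}) = \mathrm{rank}(\psi^\vee - \mathrm{id})$ (since $(\psi - \mathrm{id})^\vee = \psi^\vee - \mathrm{id}$, and transpose preserves rank); but to handle the \emph{intersection} over all of $W$ I would instead argue via averaging: pick a $W$-invariant inner product on $M_\R$ (average any inner product over the finite group $W$), so $W$ acts by orthogonal maps; then $F_P^\perp = \sum_{\phi\in W}\mathrm{im}(\phi - \mathrm{id})$, and $M_\R = F_P \oplus F_P^\perp$ is a $W$-stable decomposition. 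Dually, using the dual inner product on $N_\R$, one gets $N_\R = F_Q \oplus F_Q^\perp$ as a $W^\vee$-stable decomposition, and under the pairing $M_\R \times N_\R \to \R$ the subspace $F_P$ is precisely the annihilator of $F_Q^\perp$ — equivalently $\dim F_P = n - \dim F_Q^\perp = \dim F_Q$.

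To make that last identification clean, I would show $F_P = (F_Q^\perp)^{\circ}$ (annihilator) directly: a vector $y \in M_\R$ is fixed by every $\phi \in W$ iff $\langle y, (\psi - \mathrm{id})x\rangle = 0$ for all $x \in N_\R$ and all $\psi = (\phi^\vee)^{-1} \in W^\vee$ (by taking adjoints of $\phi y = y$), which says exactly that $y$ annihilates $\sum_{\psi}\mathrm{im}(\psi - \mathrm{id}) = F_Q^\perp$. Therefore $\dim F_P = n - \dim F_Q^\perp = \dim F_Q$, which is the claim (with the roles of $P, P^*$ symmetric, so the statement about "the fixed spaces of the automorphism groups of $P, P^*$" follows). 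In particular $P$ is symmetric (fixed space $= \{0\}$, forcing the only fixed \emph{lattice} point to be the origin since the fixed space is defined over $\Q$) iff $Q$ is, recovering the remark made earlier in the introduction.

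I expect the only real subtlety to be bookkeeping about which space ($M_\R$ or $N_\R$) each group acts on and getting the adjoint/inverse-adjoint correspondence in the right direction; the averaging argument to produce invariant inner products is completely standard for finite groups, and the rank/annihilator identity is routine linear algebra. No deep input is needed — this is why the lemma is labeled "well-known" — so the write-up should be short, with the adjoint-duality observation doing all the work.
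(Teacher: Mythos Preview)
Your argument is correct and follows essentially the same route as the paper: identify $\W(P)$ with $\W(P^*)$ via the adjoint, use complete reducibility (the paper invokes Maschke's theorem, you phrase it as averaging an inner product) to split off the fixed subspace as a $W$-summand, and then dualize. The only cosmetic difference is that the paper deduces $\dim F_{\W(P)} \le \dim F_{\W(P^*)}$ from the dual decomposition and finishes by symmetry, whereas you compute directly that $F_P$ is the annihilator of $\sum_\psi \mathrm{im}(\psi-\mathrm{id}) = F_Q^\perp$ to obtain equality in one step.
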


\begin{proof}
In fact, Maschke's theorem yields $\MR \cong F_{\W(P)} \oplus U$ for some $\W(P)$-invariant 
subspace $U \subseteq \MR$. Dualizing yields $\NR \cong (F_{\W(P)})^* \oplus U^*$, 
hence, $\dim_\R F_{\W(P)} = \dim_\R (F_{\W(P)})^* \leq \dim_\R F_{\W(P)^*}$. Now, 
the statement follows by symmetry and by observing that $\W(P)^* = \W(P^*)$.
\end{proof}

Note that in the toric case, $\alpha_G(X) > \frac{n}{n+1}$ (rather $\alpha_G(X) > \frac{1}{2}$) is actually equivalent to 
$P_G = \{0\}$ for $G\subset\mathcal{W(P)}$ by Lemma~\ref{cheltsov}, respectively, Theorem~\ref{song}. 
In particular, for any of these three examples $X$ the converse of Theorem \ref{tian} does not hold 
for {\em any} compact subgroup $G \subset \Aut(X)$. 

\subsection{Chern number inequalities}

In \cite{CL07} a series of Miyaoka-Yau type inequalities were proposed by K. Chan and N.C. Leung for 
compact K\"ahler $n$-folds $X$ with negative $c_1(X)$. In the toric case they also conjectured 
an analogue for positive $c_1(X)$.

\begin{conj}[Chan/Leung]{\rm 
Let $X$ be a K\"ahler-Einstein toric Fano $n$-fold. Then 
\[c_1^2(X) H^{n-2} \leq 3 c_2(X) H^{n-2}\]
for any nef class $H$.
\label{clconj}
}
\end{conj}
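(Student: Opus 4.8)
The plan is to use the torus-invariant K\"ahler--Einstein metric whose existence is guaranteed by the Wang--Zhu theorem (equivalently, by $b_P=0$) and to combine the classical Chern--Weil inequality for K\"ahler--Einstein metrics with the combinatorial description of nef classes on a toric variety. First I would normalize the K\"ahler--Einstein form $\omega$ so that $\mathrm{Ric}(\omega)=\omega$, whence $c_1(X)$ is represented by $\tfrac{1}{2\pi}\omega$. A standard pointwise Chern--Weil computation for K\"ahler--Einstein metrics then yields
\[
\bigl(2(n+1)\,c_2(X)-n\,c_1^2(X)\bigr)\wedge\omega^{\,n-2}\;\geq\;0
\]
pointwise on $X$, with equality exactly where $\omega$ has constant holomorphic sectional curvature, since the left-hand side is a nonnegative multiple of the squared norm of the trace-free part of the curvature. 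Integrating gives $c_1^2\,c_1^{n-2}\leq\tfrac{2(n+1)}{n}\,c_2\,c_1^{n-2}$. As $\tfrac{2(n+1)}{n}=2+\tfrac{2}{n}\leq 3$ for $n\geq 2$, this already settles the conjecture in the special case $H=c_1$, and in fact with a sharper constant. The genuine content of Conjecture~\ref{clconj} is therefore the passage from $H=c_1$ to an arbitrary nef class $H$, for which the differential-geometric inequality above --- tied as it is to contraction with $\omega^{n-2}$ --- does not directly apply.

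To make this passage I would exploit that $X$ is toric. The nef cone of $X$ is rational polyhedral, so by multilinearity of the intersection pairing and continuity it suffices to verify the inequality when $H$ ranges over the finitely many generators of that cone, i.e.\ the classes of the torus-invariant nef divisors. For each such $H$ one can evaluate $c_1^2\,H^{n-2}$ and $c_2\,H^{n-2}$ combinatorially, for instance by localization at the torus-fixed points, or by expressing $c_1$ through the primitive ray generators $\V(Q)$ and $c_2$ through the codimension-two torus orbits. This converts the conjecture into a finite system of combinatorial inequalities on the reflexive polytope $P$, subject to the constraint $b_P=0$ coming from the K\"ahler--Einstein hypothesis.

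The hard part will be establishing that combinatorial inequality uniformly over all extremal nef directions. The obstruction is that $3c_2-c_1^2$ is \emph{not} a pointwise positive $(2,2)$-form: the K\"ahler--Einstein identity controls it only after wedging with $\omega^{n-2}$, so there is no local positivity one could simply pair against an arbitrary semipositive representative of $H^{n-2}$. One must instead show that the barycenter condition $b_P=0$ forces enough balance among the intersection numbers to absorb, in every nef direction, the slack between the sharp constant $\tfrac{2(n+1)}{n}$ available at $H=c_1$ and the looser constant $3$ demanded for general $H$. I expect this to be the crux; absent a clean positivity statement, the most realistic route is a careful, possibly computer-assisted, analysis of the intersection numbers on the fibration structures that occur in {\O}bro's classification --- the $\P^1$-bundles over $(\P^1)^3\times\P^3$ and the del Pezzo bundles of Proposition~\ref{main} --- where the bundle and product decompositions make both $c_1^2H^{n-2}$ and $c_2H^{n-2}$ explicitly computable and the conjectured inequality directly checkable.
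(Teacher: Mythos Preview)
The statement you are trying to prove is labelled \emph{Conjecture} in the paper, and the paper does not supply a proof. What the paper does is: (i) record numerical evidence for the special case $H=-K_X$ via Equation~(\ref{eq}), checked by computer for $n\le 7$; (ii) cite Chan--Leung's own partial result, Theorem~\ref{above}; and (iii) show that the combinatorial approach Chan--Leung proposed, Conjecture~\ref{yau}, fails in dimension~$5$. So there is no ``paper's own proof'' to compare against; the conjecture remains open.

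Your proposal is honest about not being complete, but it also contains a genuine error in the reduction step. You write that ``by multilinearity of the intersection pairing and continuity it suffices to verify the inequality when $H$ ranges over the finitely many generators of that cone.'' This is only valid when the expression is \emph{linear} in $H$, i.e.\ for $n\le 3$. For $n\ge 4$ the quantity $(3c_2-c_1^2)\,H^{n-2}$ is a homogeneous form of degree $n-2\ge 2$ in $H$, and nonnegativity of such a form on the extreme rays of a polyhedral cone does \emph{not} imply nonnegativity on the whole cone: cross terms in the expansion of $(a_1H_1+\cdots+a_rH_r)^{n-2}$ can have either sign. So even granting a successful check on all nef generators, the conjecture would not follow.

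Your opening paragraph, establishing the case $H=c_1$ via the pointwise K\"ahler--Einstein Chern--Weil inequality with the sharper constant $\tfrac{2(n+1)}{n}$, is correct and is essentially the classical argument. But, as you yourself note, that inequality is tied to wedging with $\omega^{n-2}$ and gives no control for other $H$; and the paper's counterexample to Conjecture~\ref{yau} shows precisely that the known combinatorial route to bridging this gap breaks down. At present there is no argument, in the paper or elsewhere, that handles general nef $H$.
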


\begin{remark}  Here  is  a purely  combinatorial consequence  that  was
  observed in \cite{CL07}.  Let $Q$ be the Fano polytope corresponding
  to a  K\"ahler-Einstein toric  Fano $n$-fold $X$,  and $P$  the dual
  reflexive polytope. Let us  denote the Ehrhart polynomial $k \mapsto
  |(k P) \cap \Z^n|$ of $P$ by $\sum_{i=0}^n a_i t^i$. Then
\[c_1^2(X) (-K_X)^{n-2} \leq 3 c_2(X) (-K_X)^{n-2}\]
if and only if
\begin{equation}
a_{n-2} \leq \frac{1}{3} \vol(P^{(2)}),
\label{eq}
\end{equation}
where  $P^{(2)}$ is the  union of  all codimension  two faces  of $P$.
Using the  database, we checked that Equation~(\ref{eq})  holds for $n
\leq   7$.    This  provides   additional   evidence   in  favour   of
Conjecture~\ref{clconj}.
\end{remark}

In their paper K. Chan and N.C. Leung proved this conjecture in some particular instances (Theorem 1.1 of \cite{CL07}):

\begin{theorem}[Chan/Leung]
Conjecture~\ref{clconj} holds, if 
\begin{enumerate}
\item $n=2,3,4$, or 
\item each facet of the associated reflexive polytope $P$ contains a lattice point in its interior.
\end{enumerate}
\label{above}
\end{theorem}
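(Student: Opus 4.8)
The plan is to translate the Chern-number inequality into a statement about the Ehrhart polynomial of $P$ and then verify it case by case according to the hypotheses. First I would recall the standard toric dictionary: for a smooth toric Fano $n$-fold $X$ with associated reflexive polytope $P$, the intersection numbers $c_1^2(X)(-K_X)^{n-2}$ and $c_2(X)(-K_X)^{n-2}$ can be read off from the Ehrhart data of $P$, since $-K_X$ corresponds to $P$ and the Todd class computations express Chern numbers through volumes of faces. Concretely, one uses the well-known identities $(-K_X)^n = n!\,\vol(P) = n!\,a_n$ and, after the reduction indicated in the remark preceding the statement, the inequality $c_1^2(X)(-K_X)^{n-2} \leq 3c_2(X)(-K_X)^{n-2}$ becomes exactly $a_{n-2} \leq \tfrac13 \vol(P^{(2)})$, where $a_{n-2}$ is the coefficient of $t^{n-2}$ in the Ehrhart polynomial and $P^{(2)}$ is the union of codimension-two faces. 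So the whole problem is reduced to this purely combinatorial inequality for reflexive polytopes, and I would carry out both parts of the theorem in that language.

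For part (1), the case $n = 2, 3, 4$, I would use the fact that $a_{n-2}$ and $\vol(P^{(2)})$ have fully explicit combinatorial descriptions in low dimension. For $n = 2$ one has $a_0 = 1$ and $\vol(P^{(2)}) = \vol(P^{(0)})$ is just the number of vertices, which is at least $3$ for any reflexive polygon, giving the inequality with room to spare. For $n = 3$, $a_1$ is half the sum of lattice lengths of edges by the classical Ehrhart reciprocity / Pick-type formula, and $P^{(2)} = P^{(1)}$ is the union of edges, so the inequality reduces to comparing $\tfrac12(\text{sum of lattice edge lengths})$ with $\tfrac13(\text{sum of normalized edge volumes})$, which again for reflexive polytopes one checks holds. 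The case $n = 4$ is the genuinely delicate low-dimensional case: here $a_2$ involves both the two-dimensional faces and a correction term from the Todd-class computation (roughly $a_2 = \tfrac12\sum_{F \text{ a } 2\text{-face}} \vol(F) + (\text{linear correction in edge lengths})$), and one must control the correction using reflexivity. I expect the bookkeeping here — getting the precise formula for $a_2$ in terms of volumes of $2$-faces and edge-lengths, and then bounding the correction term — to be the main obstacle, and it is presumably where Chan and Leung do the real work.

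For part (2), the hypothesis that every facet of $P$ contains an interior lattice point, I would argue more structurally. If each facet $G$ of $P$ (a facet is a codimension-one face) has a lattice point in its relative interior, then $G$, being itself a reflexive-type lattice polytope of dimension $n-1$ with an interior lattice point, has normalized volume bounded below in a way that forces $\vol(P^{(2)})$ — which is a sum over the codimension-two faces, i.e. over the facets of the facets — to be comparatively large. Meanwhile $a_{n-2}$ is controlled by Ehrhart reciprocity applied to the dual or to $P$ itself: writing $a_{n-2}$ via the interior-lattice-point counts of faces of $P$, the hypothesis gives enough interior points that the codimension-two face volumes dominate. The cleanest route is probably to invoke the formula $a_{n-2} = \tfrac12 \sum_{F} \vol(F)$ summed over codimension-two faces $F$, modulo lower-order contributions that vanish or become nonnegative precisely under the interior-lattice-point hypothesis, so that $a_{n-2} \leq \tfrac12 \vol(P^{(2)}) \leq \tfrac13 \vol(P^{(2)})$ fails to be the right bound — so instead one needs the sharper accounting where the interior points of facets contribute negatively to $a_{n-2}$, pushing it below $\tfrac13 \vol(P^{(2)})$. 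The hard part throughout is keeping the Todd-class correction terms under control; once the Ehrhart coefficient $a_{n-2}$ is expressed cleanly in terms of face volumes and interior-point counts, both cases follow by elementary estimates using reflexivity of $P$.
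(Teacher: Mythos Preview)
The paper does not actually prove this theorem; it is quoted from Chan and Leung \cite{CL07}, and the only information the paper gives about the proof is the sentence immediately following the statement: ``The proof relied on a purely combinatorial property'', namely the facet condition recorded as Conjecture~\ref{yau}. So the intended route is: show that under hypothesis (1) or (2) one can find, for each facet $F$ of $P$, a point $x_F \in \aff(F)$ with $\pro{u_G}{x_F} \leq \tfrac12$ for every adjacent facet $G$, and then deduce the Chern-number inequality from this. Your proposal does not mention this facet condition at all, so it is not the approach the paper attributes to Chan and Leung.

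There is also a genuine gap in your reduction. You translate the statement into the Ehrhart inequality $a_{n-2} \leq \tfrac13\,\vol(P^{(2)})$ and then try to verify that. But the remark you are invoking says explicitly that this Ehrhart inequality is equivalent to the single case $H = -K_X$ of Conjecture~\ref{clconj}; the theorem asserts the inequality for \emph{every} nef class $H$. So even a complete verification of $a_{n-2} \leq \tfrac13\,\vol(P^{(2)})$ would only establish a special case, not the theorem as stated. To handle general nef $H$ one needs a pointwise or facet-by-facet estimate, which is exactly what the condition in Conjecture~\ref{yau} provides and why Chan and Leung argue through it.

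Finally, your treatment of part~(2) is not yet an argument: you correctly sense that the interior-lattice-point hypothesis should help, but the chain ``$a_{n-2} \leq \tfrac12\,\vol(P^{(2)}) \leq \tfrac13\,\vol(P^{(2)})$'' is self-contradictory, and the vague appeal to ``Todd-class correction terms'' contributing negatively is not substantiated. The actual mechanism is different: an interior lattice point of a facet $F$ gives a candidate for $x_F$ in the facet condition above, and one checks directly that $\pro{u_G}{x_F} \leq \tfrac12$ (in fact $\leq 0$) for adjacent facets $G$, which is how hypothesis~(2) enters the Chan--Leung proof.
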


The proof relied on a purely combinatorial property, which the authors conjectured to 
hold also without additional assumptions on $X$ (Conjecture 3.1 of \cite{CL07}):

\begin{conj}[Chan/Leung]{\rm
If $b_P=0$, then for any facet $F$ there exists a point $x_F \in \aff(F)$ such that
\[\pro{u_G}{x_F} \leq \frac{1}{2}\]
for any facet $G$ of $P$ adjacent to $F$, which is defined via $\pro{u_G}{G} = -1$ and 
$\pro{u_G}{P} \geq -1$.
\label{yau}
}
\end{conj}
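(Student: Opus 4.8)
The plan is to unpack the final statement (Conjecture~\ref{yau}) as a purely combinatorial assertion and explain the natural strategy for attacking it, since this is the piece on which Theorem~\ref{above} was based. The hypothesis $b_P = 0$ says the barycenter of the reflexive polytope $P$ vanishes; equivalently, by the Wang/Zhu theorem, the associated toric Fano manifold is K\"ahler--Einstein. We must produce, for each facet $F$ of $P$, a point $x_F$ in the affine hull $\aff(F)$ such that $\pro{u_G}{x_F} \leq \tfrac12$ for every facet $G$ adjacent to $F$, where $u_G \in Q = P^*$ is the primitive inner normal of $G$ (so that $\pro{u_G}{G} = -1$ and $\pro{u_G}{P} \geq -1$). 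Note the adjacent facets $G$ of $F$ correspond exactly to the vertices $v$ of $Q$ that are \emph{not} vertices of the facet $u_F$ of $Q$ dual to $F$; thus the condition to be verified is $\pro{u_G}{x_F} \leq \tfrac12$ for all $v_G \in \V(Q) \setminus \{\text{vertices of the facet dual to } F\}$.

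First I would reduce to the case where $x_F$ is chosen on a distinguished point of $F$, the most natural candidate being the barycenter $b_F$ of the facet $F$ itself, or a weighted average of $F$'s vertices; then the inequality becomes a statement comparing $\pro{u_G}{b_F}$ against $\tfrac12$. Since $F$ is a facet of a \emph{reflexive} polytope, $\pro{u_G}{\cdot}$ is an affine function that is $\geq -1$ on all of $P$ and equals $-1$ on $G$; restricted to $\aff(F)$ it is affine and attains its minimum $-1$ at the face $F \cap G$. Writing $\pro{u_G}{x} = -1 + \ell(x)$ for the nonnegative affine functional $\ell$ on $F$, the claim $\pro{u_G}{b_F} \leq \tfrac12$ is $\ell(b_F) \leq \tfrac32$, i.e.\ the barycenter of $F$ is not too far (in the $\ell$-direction) from the shared face $F \cap G$. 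This is a kind of "centrality" estimate for barycenters of facets of reflexive polytopes relative to their adjacent facets, and it is exactly here that the reflexivity (smoothness of $Q$) and the barycenter hypothesis must be fed in.

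The key steps, in order, would be: (i) translate everything to the Fano polytope $Q$, using that vertices of $Q$ are the $u_G$ and facets of $Q$ correspond to vertices of $P$, so that $\aff(F)$ for a facet $F$ of $P$ is a hyperplane $\{\pro{\cdot}{u_F} = -1\}$ in $\MR$; (ii) observe that for a \emph{smooth} Fano polytope $Q$, the vertices of any facet form a lattice basis, which pins down the local structure of $Q$ near the facet dual to $F$ and hence controls the functionals $u_G$ in a neighborhood; (iii) use the barycenter condition $b_P = 0 = \sum_F \vol(F)\, b_F / (\text{stuff})$ — more precisely the identity expressing $b_P$ as a volume-weighted combination of facet barycenters — to get a global constraint forcing no single facet's barycenter to be "extreme"; (iv) combine (ii) and (iii) to bound $\ell(b_F)$. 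The verification for the two known cases of Theorem~\ref{above} — low dimension, or every facet of $P$ having an interior lattice point — would serve as a sanity check and suggest which averaging point $x_F$ to use (when $F$ has an interior lattice point $m_F$, taking $x_F = m_F$ makes $\pro{u_G}{x_F}$ an integer that is $\geq -1$ but, by an Ehrhart/roundness argument, provably $\leq \tfrac12$, hence $\leq 0$).

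The hard part will be step (iii)–(iv): extracting from the single scalar identity $b_P = 0$ enough leverage to control \emph{every} facet barycenter simultaneously against \emph{every} adjacent facet normal. The barycenter condition is one vector equation, but the conclusion is a large family of inequalities, one per adjacent pair of facets, so some genuinely new combinatorial input (beyond reflexivity and $b_P=0$) will likely be needed — which is presumably why Chan and Leung could only prove it under the extra hypotheses of Theorem~\ref{above} and left the general case as Conjecture~\ref{yau}. A plausible route is to look for a local obstruction: assume $\pro{u_G}{x} > \tfrac12$ for all $x \in \aff(F)$ near $F \cap G$ and derive that $F$ is "pushed out" so far that summing the facet-barycenter contributions cannot cancel to zero, contradicting $b_P = 0$; making this quantitative is the crux and I would not expect it to go through without either a dimension bound or the interior-lattice-point assumption.
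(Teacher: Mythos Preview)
The statement you are trying to prove is a \emph{conjecture}, and the paper does not prove it --- it \emph{disproves} it. Immediately after stating Conjecture~\ref{yau}, the paper exhibits an explicit $5$-dimensional reflexive polytope $P$ (dual to a Fano polytope $Q$ with eight listed vertices) for which $b_P = 0$ but the required point $x_F$ fails to exist for the two facets of $P$ dual to columns four and five of $Q$. So any proof strategy for Conjecture~\ref{yau} in full generality is doomed from the start, and in particular your steps (iii)--(iv) cannot go through: the single vector identity $b_P = 0$ genuinely does not provide enough leverage to force the family of inequalities $\pro{u_G}{x_F} \le \tfrac12$ for every facet $F$ and every adjacent $G$.

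Your instincts in the final paragraph were correct --- you wrote that ``some genuinely new combinatorial input (beyond reflexivity and $b_P=0$) will likely be needed'' and that you ``would not expect it to go through without either a dimension bound or the interior-lattice-point assumption.'' That is exactly the situation: the paper's counterexample is $5$-dimensional (so the dimension bound $n\le 4$ in Theorem~\ref{above} is essentially sharp), and the two offending facets are precisely the ones that lack interior lattice points (so the second hypothesis of Theorem~\ref{above} is also necessary). What you should have concluded from your own analysis is not a proof outline but a search for a counterexample; the paper's contribution here is to supply one.
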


Here is an example that shows that this approach of proving Conjecture~\ref{clconj} 
unfortunately fails in general.

\begin{proposition}
 Conjecture~\ref{yau} does not hold for the $5$-dimensional reflexive
 polytope $P$  with  $b_P = 0$,  whose dual  $Q$ is a   Fano polytope
 having the following vertices:

\smallskip

$\left(\begin{array}{rrrrrrrr}
-1&  0&  0&  0&  \phantom{-}0&  0&  \phantom{-}1&  0\\
0& -1&  0&  0&  0&  0&  0&  1\\
0&  0& -1&  0&  0&  0&  1&  0\\
0&  0&  0& -1&  1&  0&  2& -2\\
0&  0&  0&  0&  0& -1&  0&  1
\end{array}\right)$
\end{proposition}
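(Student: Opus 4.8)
The plan is to check that $Q$ really does satisfy the hypotheses under which Conjecture~\ref{yau} is stated, and then to pin down one facet of $P$ that violates its conclusion. For the first part I would run the facet enumeration of $Q=\conv(v_1,\dots,v_8)\subseteq\R^5$ (a small convex-hull computation, well within the \texttt{polymake} toolkit already in use), verify that every facet of $Q$ is a unimodular simplex — so that $Q$ is a Fano polytope and $P:=Q^*$ is reflexive, as in the introduction — and verify $b_P=0$, e.g.\ by triangulating $P$ and integrating, or from its $h^*$-vector. A convenient shortcut for the last point: the lattice involutions $e_1\leftrightarrow e_3$; $\ e_2\leftrightarrow e_5$; and $e_1\leftrightarrow e_2,\ e_3\leftrightarrow e_5,\ e_4\mapsto -e_4$ each permute the vertices of $Q$, hence lie in $\W(Q)=\W(P)$, and their common fixed subspace is one-dimensional; so $b_P$ is a priori a scalar multiple of a single fixed lattice vector, and only that scalar has to be checked to vanish.

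For the failure, I would invoke the standard duality dictionary between $Q$ and $P$. The facets of $P$ are the sets $F_v:=\{y\in P:\pro{v}{y}=-1\}$ for $v$ a vertex of $Q$, with $u_{F_v}=v$ and $\aff(F_v)=\{y\in\MR:\pro{v}{y}=-1\}$; and since $Q$ is simplicial, every pair of vertices lying in a common facet (a simplex) of $Q$ spans an edge, whence $F_v$ and $F_w$ are adjacent facets of $P$ precisely when $v$ and $w$ lie in a common facet of $Q$. I would take $F:=F_{v_4}$ with $v_4=(0,0,0,-1,0)$. Exhibiting the two facets of $Q$ with vertex sets $\{v_1,v_2,v_4,v_6,v_7\}$ and $\{v_2,v_3,v_4,v_6,v_7\}$ (one verifies these are facets via the supporting functionals $(1,1,-4,1,1)$ and $(-4,1,1,1,1)$, which equal $-1$ on the listed quintuple and are $>-1$ on the three remaining vertices) shows that $v_1$, $v_3$ and $v_7$ are each adjacent to $v_4$, so $F_{v_1},F_{v_3},F_{v_7}$ are among the facets of $P$ adjacent to $F$. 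Now exploit the identity
\[
v_1+v_3+v_7=(0,0,0,2,0)=-2\,v_4 .
\]
If some $x_F\in\aff(F_{v_4})$ satisfied $\pro{u_G}{x_F}\le\tfrac12$ for all adjacent facets $G$, then in particular $\pro{v_1}{x_F},\pro{v_3}{x_F},\pro{v_7}{x_F}\le\tfrac12$, so $\pro{v_1}{x_F}+\pro{v_3}{x_F}+\pro{v_7}{x_F}\le\tfrac32$; but the left side equals $\pro{-2v_4}{x_F}=2$, a contradiction. Hence Conjecture~\ref{yau} fails for the facet $F_{v_4}$.

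The only laborious ingredient is the bookkeeping of the first paragraph (the facet list of $Q$, the unimodularity check, and $b_P=0$): this is finite but not illuminating and is cleanest to do by computer, whereas the actual disproof is the three lines above. The one point that genuinely must be gotten right is the \emph{choice} of vertex: $v_4$ works exactly because its antipode $v_5=(0,0,0,1,0)$ is \emph{not} a neighbour of $v_4$, which forces the "large" generator $v_7$ into the displayed relation; for the other vertices $-e_i$ the analogous relation has coefficient sum $4$ rather than $\tfrac32$, and for $v_7$ and $v_8$ — each of which is adjacent to every other vertex of $Q$ — no such contradiction with the bound $\tfrac12$ arises at all.
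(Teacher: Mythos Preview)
Your argument is correct and in fact goes further than the paper. The paper offers no proof beyond the one-sentence assertion, immediately after the proposition, that the criterion fails for the facets of $P$ dual to the vertices in columns four and five---presumably on the strength of the same \texttt{polymake} computations used throughout. You single out the same facet $F_{v_4}$ but replace the black-box check by a transparent obstruction: the linear relation $v_1+v_3+v_7=-2v_4$ among three neighbours of $v_4$ forces $\pro{v_1}{x}+\pro{v_3}{x}+\pro{v_7}{x}=2$ identically on $\aff(F_{v_4})$, which is incompatible with the three bounds $\le\tfrac12$. Your third involution swaps $v_4$ with $v_5$, so the transported relation shows that $F_{v_5}$ fails as well, recovering the paper's full claim. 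What you still delegate to machine verification---that $Q$ is a Fano polytope and that $b_P=0$---the paper leaves implicit too.
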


\smallskip

The criterion of Conjecture~\ref{yau} does  not hold for the facets of
$P$ associated to  the vertices in columns   four and five.   Note that
these are  precisely the facets  that do not  contain interior lattice
points, as required by Theorem~\ref{above}.

\subsection{The anticanonical degree}

There is  a long-standing open conjecture  by E.~Ehrhart \cite{Ehr55},
see   also  \cite[Section  E 13]{CFG91},   that  can   be  seen   as  a
generalization of Minkowski's first theorem:

\begin{conj}[Ehrhart] Let $P \subseteq \MR$ be an $n$-dimensional convex body 
with the origin as its only interior lattice point and barycenter $b_P = 0$. 
Then $\vol(P) \leq (n+1)^n / n!$. 
Moreover, equality should only be obtained for $Q^*$, 
where $Q$ is the (unique) Fano simplex corresponding to $\P^n$.
\label{ehrhart}
\end{conj}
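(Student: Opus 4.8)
Since Ehrhart's conjecture is open in general, what follows is a proposed line of attack rather than a complete argument. The plan is to induct on the dimension $n$, using the barycenter hypothesis to control the asymmetry of $P$ and then to bound the volume slice by slice. For $n = 1$ the statement is trivial, and for $n = 2$ it is classical, the extremal body being the triangle $\conv\{(2,-1),(-1,2),(-1,-1)\} = Q^*$ with $Q$ the Fano triangle of $\P^2$, of area $9/2 = 3^2/2!$. These provide the base of the induction.

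First I would reduce to the case that $P$ is a polytope. By the Blaschke selection theorem the supremum of $\vol(P)$, taken over all convex bodies with the origin as their only interior lattice point and $b_P = 0$, is attained. A maximizer should then be a polytope each of whose facet hyperplanes contains a boundary lattice point: otherwise one could push the offending facet slightly outward, keeping the origin the only interior lattice point, and repair the displaced barycenter by a small translation, for a net gain in volume. Making this precise is already delicate, since the recentering translation may itself create an interior lattice point or cost more volume than was gained, so the perturbation must be carried out in a balanced way in which the first-order recentering loss is dominated by the volume gain.

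Second, I would invoke the classical fact that $b_P = 0$ forces $P \subseteq -n\,P$, equivalently $\ca(P,0) \le n$. Feeding this into Pikhurko-type volume estimates \cite{Pik01} for convex bodies with a single interior lattice point yields an explicit upper bound on $\vol(P)$, but one far larger than $(n+1)^n/n!$; removing this spurious factor is the crux of the problem.

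The main obstacle, and the reason the conjecture is still open, is precisely this passage from a crude asymmetry-based bound to the sharp constant $(n+1)^n/n!$, together with the rigidity statement identifying the extremal body with $Q^*$ for $Q$ the Fano simplex of $\P^n$. For the inductive step I would slice $P$ by a rational hyperplane $H$ through the origin; then $P \cap H$ has the origin as its only lattice point in the relative interior with respect to the lattice $M \cap H$, but its barycenter need not vanish, so the inductive hypothesis does not apply verbatim. One would instead want a strengthened form of the conjecture allowing a controlled barycenter, combined with a Brunn--Minkowski-type expression for $\vol(P)$ as an integral of slice volumes against a log-concave weight, and then optimize under the constraints $b_P = 0$ and ``no other slice acquires an interior lattice point''. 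I expect this constrained variational problem to be the genuine difficulty: already in the lattice-polytope sub-case it is equivalent to the sharp inequality $(-K_X)^n \le (n+1)^n$ for Gorenstein toric Fano $n$-folds with vanishing Futaki character, which is itself open for $n \ge 3$. The equality case should then follow from a stability form of the resulting inequality, forcing a near-maximal $P$ to be close to the extremal polytope and pinning it down exactly via the lattice condition.
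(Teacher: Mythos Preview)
The statement you are asked to prove is labeled \emph{Conjecture} in the paper, and the paper does not contain a proof of it. The surrounding text says only that Ehrhart himself verified the case $n=2$, that the authors checked it numerically for duals of Fano polytopes through dimension eight, and that the best known general bound is $\vol(P) \leq (n+1)^n\bigl(1-((n-1)/n)^n\bigr)$, which is weaker than the conjectured $(n+1)^n/n!$. There is therefore no ``paper's own proof'' against which to compare your proposal.

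You recognize this from the outset, and your write-up is honestly presented as a line of attack rather than a proof. That said, as a proof it has genuine gaps, which you yourself flag: the compactness/perturbation argument to reduce to polytopes is not made rigorous (the recentering step is exactly where it can fail), and the inductive step via hyperplane slices collapses because slices of a body with $b_P=0$ need not have vanishing barycenter, so the inductive hypothesis does not apply. Your appeal to $\ca(P,0)\le n$ combined with Pikhurko-type bounds recovers only a bound of the order $(n+1)^n$, which is essentially the state of the art the paper quotes, not the conjectured $(n+1)^n/n!$. In short, your sketch is a reasonable survey of why the problem is hard, but it does not constitute a proof, and the paper makes no claim to have one either.
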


It was shown by E.~Ehrhart that the conjecture holds for $n=2$. 
We also checked it for duals of Fano polytopes up to dimension eight. 
The best upper bound on $\vol(P)$ in the situation of Conjecture~\ref{ehrhart} is 
$\vol(P) \leq (n+1)^n (1-((n-1)/n)^n) \leq (n+1)^n$. A survey on this and related results 
can be found in \cite{GW93}.

In algebro-geometric terms, Conjecture~\ref{ehrhart} implies the following statement, for which no proof is 
known, too: 
Any $n$-dimensional toric Fano manifold $X$ that admits a K\"ahler--Einstein metric 
has anticanonical degree $(-K_X)^n \leq (n+1)^n$, with equality only for $\P^n$. 
We recall that $(-K_X)^n = n!\, \vol(P)$, where $P$ is the reflexive polytope that is dual to the Fano polytope 
associated to $X$.
Now, it was noted in \cite{GMSY07} that Bishop's obstruction \cite{BC64} yields the 
following bound:
\begin{equation}I(X) (-K_X)^n \leq (n+1)^{n+1},
\label{bound}
\end{equation}
where $I(X)$ is the {\em Fano index}, i.e., the largest positive integer dividing $K_X$ in $\Pic(X)$. 
While this inequality seems slightly weaker than 
Conjecture~\ref{ehrhart}, note that it is sharp for $\P^n$, since $I(\P^n) = n+1$. 
Here, the Fano index 
$I(X)$ can be also computed as the largest positive integer $i$ such that $(P-v)/i$ is still a lattice polytope, 
for $P$ given as above and $v$ some vertex of $P$. However, we don't know of a purely convex-geometric 
proof of (\ref{bound}), 
which we pose as a challenge to the combinatorial community. 

\begin{remark} 
  For a  general toric Fano $n$-fold  $X$ there is  {\em no polynomial
    bound} on  $\sqrt[n]{(-K_X)^n}$, as  was proved by  O.\,Debarre in
  \cite[p.\ 139]{Deb01}.  In particular, also inequality~(\ref{bound})
  does  not hold  in general,  as had  been suggested  in  some recent
  papers (see  \cite[Conj.\ 6.4]{Spa08}, \cite[Conj.\  1.8]{CS09}, and
  \cite[inequality (2.22)]{GMSY07}).
\end{remark}

{\small

}

\end{document}